\documentclass[number,3p,preprint,9pt]{elsarticle}

\usepackage{amssymb}
\usepackage{amsmath}
\usepackage{amsthm}
\usepackage{mathtools}
\usepackage{tikz}
\theoremstyle{definition}
\newtheorem{theorem}{Theorem}

\newtheorem{remark}{Remark}
\newtheorem{observation}{Observation}
\newtheorem{proposition}{Proposition}
\newtheorem{claim}{Claim}

\usepackage{booktabs}

\usepackage{enumitem}
\setlist[itemize]{noitemsep}
\setlist[enumerate]{noitemsep}
\setlist[description]{noitemsep}

\journal{?}

\begin{document}

\begin{frontmatter}



\title{On the redundancy of transitivity constraints in the clique partitioning problem} 


\author{Noriyoshi Sukegawa} 

\affiliation{organization={Hosei Univercity},
            addressline={C2004, Koganei Campus, 3-7-2 Kajino-cho}, 
            city={Koganei-shi},
            postcode={184-8584}, 
            state={Tokyo},
            country={Japan}}

\begin{abstract}
In this study, we identify a class of redundant transitivity constraints in a 0-1 integer linear programming formulation of the clique partitioning problem.
The transitivity constraints in this class can be removed from the formulation without changing the optimal solution set, although each transitivity constraint defines a facet of the associated polytope.
This leads to a smaller formulation that is particularly effective for instances arising from correlation clustering, where edge weights are drawn from $\{-1,1\}$.
Our computational experiments show that the resulting formulation outperforms existing formulations on such instances.
\end{abstract}


\begin{keyword}


Clique partitioning problem \sep integer programming \sep redundancy \sep transitivity
\end{keyword}

\end{frontmatter}



\section{Introduction}\label{intro}

\subsection{Clique partitioning problem}
The clique partitioning problem (\texttt{CPP}) is a combinatorial optimization problem introduced by Gr\"otschel and Wakabayashi~\cite{grotschel1989cutting}. 
The \texttt{CPP} has applications in a wide range of industries and is recognized as a general framework for cluster analysis that captures models such as approximation by equivalence relations (Zahn's problem)~\cite{zahn1964approximating}, aggregation of equivalence relations (R\`egnier’s problem), modularity maximization~\cite{newman2004finding}, and correlation clustering~\cite{bansal2004correlation}. 

In the \texttt{CPP}, we are given a complete undirected graph $(V,E)$ with edge weights $w:E \to \mathbb{R}$. 
In applications to cluster analysis, vertices represent data points, and edge weights represent similarities between pairs of data points, where larger values indicate higher similarity. 
The objective is to partition the vertex set $V$ so that the total weight of the edges whose endpoints lie in the same part is maximized. 
The number of parts in the partition is not fixed, and hence the number of possible partitions (i.e., solutions of the \texttt{CPP}) is given by the $n$-th Bell number, where $n=|V|$.

The aforementioned special cases of the \texttt{CPP}, such as Zahn's problem and R\`egnier's problem, are all known to be NP-hard.
Therefore, the \texttt{CPP} is also NP-hard.
For the design of both exact and heuristic methods for the \texttt{CPP}, the following $0$--$1$ integer linear programming formulation, introduced in~\cite{grotschel1989cutting}, is commonly used; see, e.g.,~\cite{van2007correlation,agarwal2008modularity,nowozin2009solution,bruckner2013evaluation,jaehn2013new,miyauchi2013computing,sukegawa2013lagrangian}.
\begin{align*}
\begin{array}{llll}
\text{(P)}: &\text{max.}    &\displaystyle \sum_{\{i,j\} \in E} w_{ij} x_{ij}\\\\
            &\text{s. t.}  &x_{ij} + x_{jk} \le 1 + x_{ik} &\text{for all distinct $i,j,k \in V$}\\
            &                   &x_{ij}\in \{0,1\}                    &\text{for all distinct $i,j \in V$}. 
\end{array}
\end{align*}
Here, we use a $0$--$1$ variable $x_{ij}$ that takes $1$ if $i$ and $j$ are in the same part. 
The inequalities ensure that if $i$ and $j$, and $j$ and $k$, are in the same part, then $i$ and $k$ are also in the same part, and hence called transitivity constraints.
Since the number of transitivity constraints is $O(n^3)$, this formulation can be impractical for even medium-sized instances when solved by a general-purpose solver. 

\subsection{Motivation}
To address the size issue caused by the transitivity constraints, cutting-plane-based methods have been proposed and tested in the literature~\cite{grotschel1989cutting,ji2007branch,simanchev2019branch,letchford2024separation}. 
Accordingly, the facet structure of the associated polytope has also been  investigated~\cite{grotschel1990facets,oosten2001clique, letchford2025new}. 

Cutting-plane-based methods improve computational efficiency by excluding \emph{redundant} constraints that are automatically satisfied when maximizing the objective function.
In contrast, another line of research aims to identify redundant constraints based on the sign pattern of the objective function coefficients, initiated by Dinh and Thai~\cite{dinh2015toward} in the context of modularity maximization.
An extension to the \texttt{CPP} is given in~\cite{miyauchi2015redundant}. 
It states that removing all transitivity constraints of the form $x_{ij} + x_{jk} \le 1 + x_{ik}$ with $w_{ij} < 0$ and $w_{jk} < 0$ does not change the set of optimal solutions of (P). 
In other words, the set of optimal solutions of
\begin{align*}
\begin{array}{llll}
\text{(RP)}: &\text{max.}    &\displaystyle \sum_{\{i,j\} \in E} w_{ij} x_{ij}\\\\
            &\text{s. t.}  &x_{ij} + x_{jk} \le 1 + x_{ik} &\text{for all distinct $i,j,k \in V$ with $w_{ij}\ge 0$ or $w_{jk}\ge 0$ }\\
            &                   &x_{ij}\in \{0,1\}                    &\text{for all distinct $i,j \in V$}
\end{array}
\end{align*}
coincides with that of (P). 
Here, R denotes \emph{reduced}.
It was demonstrated in \cite{miyauchi2015redundant} that solving (RP) instead of (P) reduces solver time, especially for instances of modularity maximization, where many edges tend to have negative weights.
Later, it was shown in~\cite{miyauchi2018exact} that replacing $\ge$ with $>$ in the conditions of (RP) may yield an optimal solution that is infeasible for (P); nevertheless, such a solution can be transformed into an optimal solution for (P) via postprocessing that runs in $\mathrm{O}(n)$ time.
They also showed that this approach improves the performance of SAT-solver-based methods for the \texttt{CPP}~\cite{berg2013optimal}.

As mentioned in~\cite{miyauchi2015redundant,miyauchi2018exact}, an open question in this direction is whether the conditions can be strengthened to $w_{ij} + w_{jk} \ge 0$.
Note that $w_{ij} + w_{jk} \ge 0$ implies $w_{ij} \ge 0$ or $w_{jk} \ge 0$, and in particular, the gap between these conditions becomes significant in the presence of edges with large negative weights. 
According to the preliminary experimental results in~\cite{miyauchi2015redundant}, the answer to this open question appears to be affirmative; however, no formal proof has been provided to date.

\subsection{Our contribution}
In this paper, we prove the following, answers the aforementioned open question affirmatively. 
\begin{theorem}\label{thm1}
The set of optimal solutions of 
\begin{align*}
\begin{array}{llll}
\text{(FRP)}: &\text{max.}    &\displaystyle \sum_{\{i,j\} \in E} w_{ij} x_{ij}\\\\
            &\text{s. t.}  &x_{ij} + x_{jk} \le 1 + x_{ik} &\text{for all distinct $i,j,k \in V$ with $w_{ij}+w_{jk}\ge 0$ }\\
            &                   &x_{ij}\in \{0,1\}                    &\text{for all distinct $i,j \in V$}
\end{array}
\end{align*}
coincides with that of (P). 
\end{theorem}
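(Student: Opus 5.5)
Since (FRP) is obtained from (P) by deleting constraints, every feasible solution of (P) is feasible for (FRP), so $\mathrm{opt}(\mathrm{FRP}) \ge \mathrm{opt}(\mathrm{P})$. Hence it suffices to prove the following: \emph{for every feasible $x$ of (FRP) that violates some constraint of (P), there is a feasible $x'$ of (FRP) with strictly larger objective value.} Given this, every optimal solution of (FRP) is feasible for (P). Then the two optimal values coincide, and each optimal solution of (FRP) is optimal for (P). Conversely, each optimal solution of (P) attains $\mathrm{opt}(\mathrm{FRP})$ and is feasible for (FRP), so it is optimal for (FRP). This gives equality of the optimal solution sets.

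For the improvement step, fix a violated triple: $x_{ij}=x_{jk}=1$, $x_{ik}=0$, which forces $w_{ij}+w_{jk}<0$. Write $N(j)=\{l : x_{jl}=1\}$. The basic observation is that for any $l,m\in N(j)$ with $x_{lm}=0$, the constraint $x_{lj}+x_{jm}\le 1+x_{lm}$ must be absent from (FRP), so $w_{jl}+w_{jm}<0$.

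The local move modifies only the variables incident to $j$. Choose $l^*\in N(j)$ maximizing $w_{jl}$, and keep $x_{jl}=1$ only for $l\in S:=\{l^*\}\cup\{l\in N(j): x_{l^*l}=1\}$. Every dropped $l$ satisfies $x_{l l^*}=0$, so $w_{jl}+w_{jl^*}<0$. Together with $w_{jl}\le w_{jl^*}$ this gives $w_{jl}<0$, so dropping such $l$ strictly increases the objective whenever $N(j)\setminus S\neq\emptyset$. As a fallback, isolating $j$ entirely (setting all $x_{jl}=0$) preserves feasibility of every transitivity constraint: any constraint containing a variable $x_{jl}$ on the left-hand side becomes trivially satisfied, and one with $x_{lj}$ on the right has a left-hand side containing $x_{mj}=0$. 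The same observation shows that shrinking $j$'s neighbourhood to $S$ only risks violating constraints of the form $x_{lm}+x_{mj}\le 1+x_{lj}$ with $m\in S$ and $l\notin S$.

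The case analysis I plan to carry out is as follows. If the shrink to $S$ (or to a suitable sub-clique of it) is (FRP)-feasible and drops a vertex, we are done. Otherwise, either $N(j)=S$, so the violation $x_{ik}=0$ lies inside the neighbourhood of $l^*$ and I transfer the argument to the vertex $l^*$ (which has $x_{l^*i}=x_{l^*k}=1$, $x_{ik}=0$); or some constraint $x_{lm}+x_{mj}\le 1+x_{lj}$ with $w_{lm}+w_{mj}\ge 0$ blocks the shrink, in which case $x_{lm}=1$ and $w_{lm}>-w_{mj}$ is used to compare with moving $j$ differently. To make the transfer terminate, I would choose the counterexample $x$ extremal, e.g.\ optimal for (FRP) and, among those, with the fewest violated triples, and I would pick $(i,j,k)$ to maximize $w_{ij}+w_{jk}$.

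The main obstacle is exactly this step: guaranteeing that the local move is simultaneously (FRP)-feasible and \emph{strictly} improving, without creating new violations elsewhere, so that a potential argument closes. If the vertex-local move fails, my first alternative is a global one: replace $x$ by the partition given by a suitable clique decomposition of the graph $\{\{l,m\}: x_{lm}=1\}$, and compare objectives edge by edge using the pairwise inequality $w_{jl}+w_{jm}<0$ for every non-adjacent pair in a common neighbourhood.
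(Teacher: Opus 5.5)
Your reduction to an improvement step is correct, and so are the local facts you record: $w_{jl}+w_{jm}<0$ for non-adjacent $l,m\in N(j)$, feasibility of isolating $j$, and where shrinking $N(j)$ can create violations. But the improvement step itself is left open, and the single-vertex strategy you propose for it cannot work in principle.

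Take $V=\{a_1,a_2,b_1,b_2,d\}$ with $w_{a_1a_2}=3$, $w_{b_1b_2}=2$, $w_{a_ib_j}=-1$, $w_{b_jd}=1/2$, $w_{a_id}=-10$. Let $x$ equal $1$ on all pairs except $a_1d$ and $a_2d$; its value is $2$. The only violated transitivity constraints are $x_{a_ib_j}+x_{b_jd}\le 1+x_{a_id}$, which are absent from (FRP) because $-1+1/2<0$, so $x$ is (FRP)-feasible. Zeroing the four variables $x_{a_ib_j}$ gives the partition $\{a_1,a_2\},\{b_1,b_2,d\}$ of value $6$. Yet checking all drop sets shows that every (FRP)-feasible change of the variables incident to a single vertex strictly lowers the objective:
\begin{itemize}
\item The only feasible drops are isolations, costing $1$ or $1/2$, and removals of positive edges.
\item Removing $b_1$ from $a_1$ is blocked by the present constraint $x_{b_1a_2}+x_{a_2a_1}\le 1+x_{b_1a_1}$, since $-1+3\ge 0$.
\item Removing $\{a_1,a_2\}$ from $b_1$ is blocked by $x_{a_ib_2}+x_{b_2b_1}\le 1+x_{a_ib_1}$, since $-1+2\ge 0$.
\item Adding an edge $a_id$ costs $10$.
\end{itemize}
Your specific rule also cycles here. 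At $j=b_1$ you get $l^*=b_2$ and $S=N(b_1)$, so you transfer to $b_2$. There you get $l^*=b_1$ and $S=N(b_2)$, and you transfer back. All violated triples tie at $w_{ij}+w_{jk}=-1/2$, so your tie-breaking does not help. Hence no argument that only exhibits single-vertex moves, with any tie-breaking potential, can close the gap. The improving move here zeroes a $K_{2,2}$ of edges, which is not a star.

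The missing idea is to use cut moves. Take any cut $(S,T)$ of a connected component of the graph of pairs with $x_{ij}=1$, and zero every variable crossing it. This keeps (FRP)-feasibility, because a triple meeting both sides retains at most one edge; your isolation of $j$ is the special case $S=\{j\}$. Hence an (FRP)-optimal $x$ has no negative cut in any component. The theorem then reduces to a purely combinatorial statement: a connected graph with no negative cut, in which every induced path $u$--$k$--$v$ satisfies $w_{uk}+w_{kv}<0$, is complete.

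That statement carries the real content, and the paper proves it by induction:
\begin{itemize}
\item Take a minimum-weight cut with the fewest crossing edges.
\item Show that both sides inherit the three properties, so they are cliques by induction.
\item Show that a vertex adjacent to only part of the other side has negative weight to that side.
\item Deduce that there are vertices $p\in S$ and $q\in T$ adjacent to everything, and contract $p$ and $q$.
\end{itemize}
Your global fallback, comparing with a clique decomposition via the pairwise inequalities, points at a true statement but only restates the goal. It is not a proof without a rule for choosing the decomposition and aggregating the inequalities, which is exactly what this induction supplies.
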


Assume that the given edge weights $w$ are rational.  
Let $w'$ be the edge weights obtained by perturbing $w$, specifically by setting $w'_{ij} = w_{ij} - \epsilon$, where $\epsilon$ is a sufficiently small positive number. 
We observe that it is possible to choose $\epsilon$ so that 
\begin{itemize}[itemsep=0.5ex]
\item we never have $w'_{ij} + w'_{jk} = 0$ (thus, the condition $w'_{ij} + w'_{jk} \ge 0$ is equivalent to $w'_{ij} + w'_{jk} > 0$); 
\item any optimal solution with respect to $w'$ is also optimal with respect to $w$. 
\end{itemize}
This perturbation is expected to be particularly effective for instances arising in correlation clustering and group technology, where edge weights are drawn from $\{-1,1\}$. 

For example, when edge weights take these values uniformly at random, the condition $w_{ij} > 0$ or $w_{jk} > 0$ holds with probability $75\%$ for the modified version of (RP) introduced in~\cite{miyauchi2018exact}, which we denote by (mRP), whereas $w_{ij} + w_{jk} > 0$ holds with probability $25\%$ for (FRP) with the above perturbation preprocessing, which we denote by (pFRP). 

We conduct numerical experiments to investigate whether solving (pFRP) is faster than solving (mRP). 
The main findings are as follows.
\begin{enumerate}[itemsep=0.5ex]
\item Solving (pFRP) requires approximately half the time needed to solve (mRP) for graphs in which edge weights are drawn uniformly at random from $\{-1,1\}$. 
Furthermore, this advantage becomes more pronounced for graphs with structures that are well suited for clustering.
\item In contrast, for graphs typical in modularity maximization, where negative edge weights have relatively small magnitudes, the difference between (mRP) and (pFRP) is small. 
This is because, in such cases, if a constraint does not appear in (pFRP), i.e., $w_{ij} + w_{jk} \le 0$ holds, then it is likely that the constraint also does not appear in (mRP), i.e., both $w_{ij} \le 0$ and $w_{jk} \le 0$ hold. 
We also find that (pFRP) can be inferior to (mRP) in terms of solver time. 
In such cases, a different formulation, recently introduced in~\cite{koshimura2022concise} by Koshimura et al., is observed to outperform both (mRP) and (pFRP). 
\end{enumerate}

The remainder of this paper is organized as follows.
In Section~\ref{pre}, we examine properties of the optimal solutions to (FRP) that are used in the proof of Theorem~\ref{thm1}.
Based on these properties, we prove Theorem~\ref{thm1} in Section~\ref{proof}.
Section~\ref{ne} reports the results of numerical experiments.
Finally, Section~\ref{conc} concludes the paper.

\section{Preliminaries}\label{pre}

Let $\boldsymbol{x}$ be an arbitrary optimal solution to \text{(FRP)}.
Since \text{(FRP)} is a relaxation of \text{(P)}, it suffices to show that $\boldsymbol{x}$ is feasible for \text{(P)}. 
To this end, we look at the edge subset corresponding to $\boldsymbol{x}$; that is, 
\begin{align*}
X = \{ \{i,j\} \mid x_{ij}=1, \, i\neq j, \, i,j \in V \}. 
\end{align*}
We can conclude that $\boldsymbol{x}$ is feasible for (P) if $\{i,j\}, \{j,k\} \in X$ implies $\{i,k\} \in X$ for any distinct $i,j,k\in V$. 
Such an edge subset is referred to as the \emph{clique partitioning}  in the literature.  

To show that $X$ is a clique partitioning, it suffices to show that any connected component of $(V,X)$ is a complete subgraph (or clique) of $(V,X)$. 
To this end, take any connected component of $(V,X)$, and denote it by $C$. 
We write $(C,F)$ for the subgraph of $(V,X)$ induced by $C$, and for simplicity, with a slight abuse of notation, we write $w$ for the restriction of $w$ to $F$.

Let $(S,T)$ be any pair of subsets of $C$ with no intersection. 
We write $\delta(S,T)$ for the set of edges in $F$ with one endpoint in $S$ and the other in $T$; that is, 
\begin{align*}
\delta(S,T) = \{ \{i,j\} \in F \mid i \in S,\ j \in T \}
\end{align*}
where $F$ is omitted as it is always clear from the context.
Its weight is defined by 
\begin{align*}
w(S,T) = \sum_{\{i,j\} \in \delta(S,T)} w_{ij}. 
\end{align*}
We observe the following.  

\begin{observation}\label{obs1}
The graph $(C,F)$ with $w$, constructed as above, satisfies the following properties:
\begin{itemize}
\item[(C1)] The graph $(C,F)$ is connected.
\item[(C2)] The weight of any cut in $(C,F)$ is non-negative.
\item[(C3)] If $\{i,j\}, \{j,k\} \in F$ but $\{i,k\} \notin F$, then $w_{ij} + w_{jk} < 0$.
\end{itemize}
\begin{proof}
From the construction of $(C,F)$, properties (C1) and (C3) hold.
Suppose that there exists a cut $(S,T)$ with negative weight.
Let $\boldsymbol{x}'$ be the solution obtained from $\boldsymbol{x}$ by setting to zero all variables corresponding to edges in $\delta(S,T)$.
Then $\boldsymbol{x}'$ is feasible for \text{(FRP)}.
Indeed, any transitivity constraint of (FRP) violated by $\boldsymbol{x}'$ must involve vertices from both $S$ and $T$.
However, by the construction of $\boldsymbol{x}'$, among the three edges that appear in such a constraint, at most one can take $1$ in $\boldsymbol{x}'$.
Furthermore, since $w(S,T)<0$, the objective value of $\boldsymbol{x}'$ is strictly greater than that of $\boldsymbol{x}$, contradicting the optimality of $\boldsymbol{x}$ for (FRP).
\end{proof}
\end{observation}

\section{Proof of Theorem~\ref{thm1}}\label{proof}
\subsection{Strategy}

From Observation~\ref{obs1}, it suffices to prove the following Proposition~\ref{prop1} for all positive integers $k$ in order to establish Theorem~\ref{thm1}.

\begin{proposition}\label{prop1}
For any undirected graph $(C,F)$ with $|C|=k$ and edge weights $w$, if (C1), (C2), and (C3) hold, then $(C,F)$ is complete.
\end{proposition}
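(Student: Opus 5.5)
We plan to argue by induction on $k=|C|$, through a minimal counterexample. For $k\le 2$ the claim is immediate from (C1). So suppose $(C,F)$ with weights $w$ satisfies (C1)–(C3), is not complete, and $|C|$ is minimal among such graphs.

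Since $(C,F)$ is connected but not complete, it contains an induced path $i$–$j$–$k$ with $\{i,k\}\notin F$. By (C3), $w_{ij}+w_{jk}<0$, so at least one edge at $j$ has negative weight. Applying (C2) to the cut $(\{j\},C\setminus\{j\})$ gives $\sum_{\ell:\{j,\ell\}\in F} w_{j\ell}\ge 0$. Hence $j$ also has a neighbour joined to it by a positive edge.

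The reduction step is to contract an edge $\{u,v\}\in F$ of maximum weight; this weight is positive by the previous paragraph. Contraction merges $u,v$ into a single vertex $z$ and sets $w_{z\ell}=w_{u\ell}+w_{v\ell}$, with the convention that a missing edge contributes weight $0$. This gives a graph on $k-1$ vertices.

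\begin{itemize}
\item (C1) is clearly preserved.
\item (C2) is preserved, because every cut of the contracted graph is a cut of $(C,F)$ that does not separate $u$ from $v$, and it has the same weight.
\item The contracted graph is not complete unless $\{i,k\}$ was destroyed by the contraction. Note that $\{u,v\}\neq\{i,k\}$, since $\{i,k\}\notin F$.
\end{itemize}

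If (C3) also survives, minimality yields a contradiction. The remaining degenerate situation, where merging makes $i$ and $k$ adjacent, i.e.\ $\{u,v\}$ meets $\{i,k\}$, would be handled by choosing the induced path $i$–$j$–$k$ away from $\{u,v\}$, or by treating it directly.

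The main obstacle is verifying (C3) after contraction. Consider a new induced path $a$–$z$–$b$ with $\{a,b\}$ missing. Its weight is $w_{za}+w_{zb}$, a sum of up to four original weights. Each of $a$ and $b$ may be adjacent to only one of $u,v$, and then the original graph contains the non-edges $\{a,v\}$ or $\{a,u\}$.

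We would first apply (C3) to the original paths $a$–$u$–$v$, $a$–$v$–$u$ (and similarly for $b$) wherever the relevant non-edge exists. This gives inequalities such as $w_{au}<-w_{uv}$, and maximality of $w_{uv}$ then makes these partial sums strongly negative. Where $a$ is adjacent to both $u$ and $v$, we would use (C3) on $a$–$u$–$b$ and $a$–$v$–$b$ directly. Combining these cases should give $w_{za}+w_{zb}<0$.

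If this case analysis fails, the fallback is a global counting argument, done without contraction. We would sum (C2) over a suitable family of cuts, for instance the stars $(\{j\},C\setminus\{j\})$ together with the cuts separating $N(j)\cup\{j\}$ from the rest. We would then compare the result with the sum of the (C3) inequalities over all induced $2$-paths, aiming to derive $0\le\text{(negative quantity)}$.
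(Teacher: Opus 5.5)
\textbf{The proposal has a genuine gap.} Several preliminary steps are sound: the base case, the induced path $i$--$j$--$k$, positivity of the maximum edge weight, and preservation of (C1) and (C2) under contraction. But the proof rests on the step you leave open, and the local argument you sketch for it does not go through.

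Take $a$ adjacent to both $u$ and $v$, $b$ adjacent to $u$ only, and $\{a,b\}\notin F$. The only (C3) inequalities within $\{u,v,a,b\}$ are $w_{au}+w_{bu}<0$ and $w_{bu}+w_{uv}<0$, and maximality gives $w_{au},w_{av}\le w_{uv}$. The values $w_{uv}=w_{au}=w_{av}=1$, $w_{bu}=-3/2$ satisfy all of these, yet $w_{za}+w_{zb}=1/2>0$.

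You also omit induced paths in which $z$ is an \emph{endpoint}. Suppose $a$ is adjacent to $u$, $v$ and $b$, while $b$ is adjacent to neither $u$ nor $v$. The contracted graph then needs $w_{ua}+w_{va}+w_{ab}<0$, but you only have $w_{ua}+w_{ab}<0$ and $w_{va}+w_{ab}<0$; take $w_{ab}=-1$, $w_{ua}=w_{va}=9/10$, $w_{uv}=1$. So (C3) after contraction does not follow from local (C3) inequalities plus maximality. It would need (C2) in some global way that you do not supply.

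Separately, nothing ensures the contracted graph is still incomplete. Suppose every non-edge meets $\{u,v\}$ and every vertex is adjacent to $u$ or $v$; for example $C=\{u,v,a,b\}$ with $F$ consisting of all pairs except $\{v,b\}$. Then contraction produces a complete graph, and minimality yields no contradiction. Since $\{u,v\}$ is dictated by maximality, you cannot choose the induced path to avoid it. The fallback counting argument is too unspecified to assess.

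The missing idea is to contract two \emph{universal} vertices $p,q$, i.e.\ vertices adjacent to every other vertex of $C$. Every non-edge then avoids $p$ and $q$, so incompleteness survives. The merged vertex $pq$ is universal, so the only new induced paths are $i$--$pq$--$k$ with $i,k$ adjacent to both $p$ and $q$. There (C3) follows by adding (C3) for $i$--$p$--$k$ and $i$--$q$--$k$, which is exactly the one subcase of your analysis that closes.

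The real work in the paper is producing such $p$ and $q$:
\begin{enumerate}
\item Take a minimum-weight cut $(S,T)$ with the fewest crossing edges.
\item Show that both sides inherit (C1)--(C3), and hence are complete by induction.
\item Show that any $s\in S$ adjacent to some but not all of $T$ has $w(\{s\},T)<0$. This comes from summing (C3) over the paths $s$--$t$--$u$ and using (C2) on the cut $(T_s,T\setminus T_s)$ inside $T$.
\item Conclude from $w(S,T)\ge 0$ that some $p\in S$ is adjacent to all of $T$, and symmetrically some $q\in T$ is adjacent to all of $S$.
\end{enumerate}
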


Our proof proceeds by induction on $k$.
The base case $k=1$ is trivial.
For the inductive step, let $(C, F)$ be an undirected graph with $|C| = n \ge 2$ and edge weights $w$ satisfying (C1), (C2), and (C3). 
Now, the goal is to prove that $(C,F)$ is complete. 
Assume, as the induction hypothesis, that Proposition~\ref{prop1} holds for all positive integers $k < n$.

\subsection{Outline}

Let $(S,T)$ be a minimum-weight cut of $(C,F)$, and assume that among all such cuts, it minimizes the number of edges crossing the cut; i.e., $|\delta(S,T)|$. 

Let $(S,F_S)$ denote the subgraph of $(C,F)$ induced by $S$. 
With a slight abuse of notation, we write $w$ for the restriction of $w$ to $F_S$.
Similarly, let $(T,F_T)$ denote the subgraph induced by $T$, and let $w$ denote the restriction of $w$ to $F_T$.

\begin{claim}\label{cl1}
Both the graphs $(S,F_S)$ and $(T,F_T)$ with edge weights $w$ satisfy (C1), (C2), and (C3).  
\begin{proof}
A proof is given in Section~\ref{proof_cl1}.
\end{proof}
\end{claim}

Since both $S$ and $T$ are nonempty, both $|S|$ and $|T|$ are strictly less than $n$. 
Hence, combining Claim~\ref{cl1} with the induction hypothesis, we see that $(S,F_S)$ and $(T,F_T)$ are both complete subgraphs of $(C,F)$. 

\begin{claim}\label{cl2}
Let $s \in S$ be a vertex that is adjacent to some but not all vertices in $T$; that is,
\begin{align*}
0 < |\delta(\{s\},T)| < |T|.
\end{align*}
Then, $w(\{s\}, T) < 0$.
\begin{proof}
A proof is given in Section~\ref{proof_cl2}.
\end{proof}
\end{claim}

Since $(C,F)$ satisfies (C1), there exists $s \in S$ with $|\delta(\{s\},T)| >0$. 
Among such vertices, there must exist at least one vertex $p \in S$ such that $|\delta(\{p\},T)| = |T|$. Otherwise, from Claim~\ref{cl2}, we obtain
\begin{align*}
w(S,T) = \sum_{s \in S} w(\{s\},T) < 0,
\end{align*}
which contradicts the assumption that (C2) holds for $(C,F)$. 
Fix such a vertex $p$. 
Note that $p$ is adjacent to every other vertex in $C$ 
because it is adjacent to every other vertex in $S$ due to the completeness of the subgraph $(S,F_S)$, and to every vertex in $T$ from $|\delta(\{p\},T)| = |T|$. 
By the symmetry, in a similar manner, we can show that there exists a vertex $q \in T$ that is adjacent to every other vertex in $C$. 

Consider the graph $(C',F')$ obtained from $(C,F)$ by contracting the two vertices $p$ and $q$ into a single vertex $pq$. 
Formally, $C' \coloneqq C \cup \{pq\} \setminus \{p,q\}$ and the edge set $F'$ is constructed as follows:
\begin{itemize}
\item for any distinct $i,j \in C'$ with $i \neq pq$ and $j \neq pq$, add $\{i,j\}$ to $F'$ if $\{i,j\} \in F$, and
\item  for any $j \in C' \setminus \{pq\}$, add $\{pq,j\}$ to $F'$.
\end{itemize}
In addition, define edge weights $w':F' \to \mathbb{R}$ by 
\begin{align*}
w_{ij}' = 
\begin{cases}
w_{ij} &\text{(if $i\neq pq$ and $j\neq pq$)}\\
w_{pj} + w_{qj} & \text{(if $i= pq$ and $j\neq pq$)}
\end{cases} ~~~\text{for each $\{i,j\} \in F'$.}
\end{align*}

\begin{claim}\label{cl3}
The graph $(C',F')$ with edge weights $w'$ satisfies (C1), (C2), and (C3).
\begin{proof}
A proof is given in Section~\ref{proof_cl3}. 
\end{proof}
\end{claim}

It follows from the construction of $(C',F')$ that $|C'| < n$.
Hence, combining Claim~\ref{cl3} with the induction hypothesis, we see that $(C',F')$ is complete. 
We now show that $(C,F)$ is complete. 
Suppose for contradiction that $\{i,j\} \notin F$ holds for some distinct $i,j \in C$. 
Since both $p$ and $q$ are adjacent to all other vertices in $C$, neither $i$ nor $j$ is equal to $p$ or $q$. 
In this case, $\{i,j\} \notin F$ implies $\{i,j\} \notin F'$ by the definition of $F'$.
However, this contradicts the completeness of $(C',F')$.
Therefore, $(C,F)$ is complete, which completes the proof of Theorem~\ref{thm1}.

\subsection{Proof of the claims}

\subsubsection{Claim~\ref{cl1}}\label{proof_cl1}
By symmetry, it suffices to prove the assertion for $(T,F_T)$. 
Since (C3) holds for $(C,F)$ with $w$, (C3) also holds for $(T,F_T)$ with $w$. 
Take any cut $(A,B)$ of $(T,F_T)$. 
Now, consider two cuts $(S \cup A, B)$ and $(S \cup B, A)$ of $(C,F)$. 
The weights of these two cuts satisfy
\begin{align*}
w(S,B) + w(A,B) &= w(S \cup A, B) \ge w(S,T)\\
w(S,A) + w(A,B) &= w(S \cup B, A) \ge w(S,T)
\end{align*}
from the minimality of the weight of the cut $(S,T)$ for $(C,F)$ with $w$. 
Noting that $w(S,T)=w(S,A)+w(S,B)$, these two inequalities imply 
\begin{align*}
2 \cdot w(A,B) \ge w(S,T) \ge 0
\end{align*}
where the last inequality follows from the assumption that (C2) holds for $(C,F)$ with $w$. 
Hence, (C2) holds for $(T,F_T)$ with $w$. 
Finally, we show that (C1) holds for $(T,F_T)$. 
Suppose for contradiction that there exists a cut $(A,B)$ such that no edges exist between $A$ and $B$, which also implies $w(A,B)=0$. 
In this case, there must exist edges between $S$ and $A$, and between $S$ and $B$. 
Otherwise, $(C,F)$ is not connected and hence does not satisfy (C1). 
On the other hand, we observe the following:  
\begin{itemize}
\item If $w(S,A)>0$, then the minimality of the weight of $(S, T)$ is violated because
\begin{align*}
w(S \cup A, B) &= w(S,B) + w(A,B)\\ &< w(S,B) + w(S,A)\\ &= w(S,T).
\end{align*}
\item If $w(S,A)<0$, then (C2) does not hold for $(C,F)$ with $w$ because the weight of the cut $(S \cup B, A)$ is given by $w(S,A)+w(A,B)=w(S,A)$. 
\end{itemize}
Therefore, we must have $w(S,A)=0$. 
This implies that $(S \cup A, B)$ has the same weight as $(S,T)$ but has fewer crossing edges, contradicting the choice of $(S,T)$.

\subsubsection{Claim~\ref{cl2}}\label{proof_cl2}
Let $s\in S$ be any vertex satisfying the condition of the claim. 
To simplify the notation, we write $T_s$ for the set of vertices $u\in T$ with $\{s,u\} \in F$. 
Since $0<|\delta(\{s\},T)|<|T|$, we have $T_s \neq \emptyset$ and $T \setminus T_s \neq \emptyset$. 

Now, focus on the cut $(T_s, T \setminus T_s)$ of $(T,F_T)$. 
For each pair of vertices $t \in T_s$ and $u \in T \setminus T_s$, from the definition of $T_s$ and the completeness of $(T,F_T)$, we have 
$\{s,t\}, \{t,u\} \in F  \mbox{ and } \{s,u\} \notin F$. 
This implies that for each pair of vertices $t \in T_s$ and $u \in T \setminus T_s$, we have
\begin{align}\label{myeq}
w_{st} + w_{tu} < 0
\end{align}
because (C3) holds for $(C,F)$. 
Then, for each vertex $u \in T\setminus T_s$, we observe 
\begin{align}\label{myeq2}
w(\{s\},T_s) + w(T_s,\{u\}) < 0
\end{align}
by summing up $(\ref{myeq})$ for all $t \in T_s$. 
Finally, summing up $(\ref{myeq2})$ for all $u \in T\setminus T_s$, 
\begin{align*}
|T\setminus T_s| \cdot w(\{s\},T_s) + \sum_{u \in T\setminus T_s} w(T_s,\{u\}) < 0
\end{align*}
follows. 
The second term on the left-hand side of this inequality coincides with $w(T_s, T \setminus T_s)$. 
Combining with the fact that (C2) holds for $(T,F_T)$, this term must be nonnegative. 
This implies that the first term on the left-hand side of this inequality is negative. 
It follows that $w(\{s\},T_s)<0$ as $T\setminus T_s \neq \emptyset$. 
This completes the proof of the claim since $\delta(\{s\},T_s) = \delta(\{s\},T)$ and hence $w(\{s\},T_s) = w(\{s\},T)$. 

\subsubsection{Claim~\ref{cl3}}\label{proof_cl3}
First, since $(C,F)$ is connected, $(C',F')$ is also connected. 
Hence, $(C',F')$ satisfies (C1).
Next, we verify that $(C',F')$ satisfies (C2).
By the construction of $(C',F')$, the weight of any cut in $(C',F')$ coincides with the weight of some cut $(S,T)$ in $(C,F)$ in which $p$ and $q$ belong to the same part of the cut; that is $p,q\in S$ or $p,q\in T$. 
It follows that $(C',F')$ satisfies (C2) because $(C,F)$ satisfies (C2). 
Finally, we verify that $(C',F')$ satisfies (C3).
To this end, take any distinct $i,j,k\in C'$, and suppose that 
\begin{align*}
\{i,j\}, \{j,k\} \in F' \text{ and } \{i,k\} \notin F'
\end{align*}
holds. 
Now, the goal is to show $w'_{ij}+ w'_{jk} <0$. 

\paragraph{Case 1}
Suppose that none of $i,j,k$ is the contracted vertex $pq$. 
In this case, $w'_{ij}=w_{ij}$ and $w'_{jk}=w_{jk}$ hold by the construction of $w'$. 
Since (C3) holds for $(C,F)$ with $w$, we have
\begin{align*}
w'_{ij}+ w'_{jk} = w_{ij}+ w_{jk} <0.
\end{align*}

\paragraph{Case 2}
Suppose that one of $i,j,k$ is the contracted vertex $pq$. 
It must be that $j = pq$ since $pq$ is adjacent to every other vertex of $C'$ in $(C',F')$. 
This implies $\{i,k\}\notin F$ from the construction of $F'$. 
On the other hand, 
$\{i,p\},\{p,k\},\{i,q\},\{q,k\}\in F$
since $p$ and $q$ are adjacent to every other vertex of $C$ in $(C,F)$. 
Combining with the fact that $(C,F)$ satisfies (C3) for $w$, we observe
\begin{align*}
w_{ip}+w_{pk} <0 \text{ and } w_{iq}+w_{qk} <0. 
\end{align*}
Since 
$w'_{ij}+ w'_{jk} = w'_{i,pq}+ w'_{pq,k} = w_{ip} + w_{iq} + w_{pk}+w_{qk}$, 
we have $w'_{ij}+ w'_{jk}<0$. 


\section{Numerical Experiments}\label{ne}

\begin{table}[t]
\centering
\small
\begin{minipage}{0.49\linewidth}
\begin{tabular}{lrrr}
\toprule
ID&\multicolumn{1}{c}{(mRP)}&\multicolumn{1}{c}{(pCP)}&\multicolumn{1}{c}{(Ours)}\\
\midrule
1 & 16.761 (9274) & 11.616 (6156) & \textbf{8.317} (\textbf{3158}) \\
2 & 11.879 (9050) & 10.864 (6043) & \textbf{6.321} (\textbf{2934}) \\
3 & 22.415 (9698) & 20.613 (6649) & \textbf{14.629} (\textbf{3574}) \\
4 & 24.276 (8367) & 10.041 (5115) & \textbf{6.302} (\textbf{2329}) \\
5 & 19.198 (9179) & 13.644 (6152) & \textbf{8.641} (\textbf{3029}) \\
6 & 36.634 (9522) & 39.155 (6517) & \textbf{23.777} (\textbf{3358}) \\
7 & 53.297 (9220) & 38.798 (6207) & \textbf{26.309} (\textbf{3156}) \\
8 & 17.413 (8843) & 12.301 (5638) & \textbf{8.612} (\textbf{2749}) \\
9 & 17.086 (9339) & 12.414 (6275) & \textbf{6.367} (\textbf{3261}) \\
10 & 17.991 (9269) & 11.754 (6265) & \textbf{7.161} (\textbf{3107}) \\
\bottomrule\\
\multicolumn{4}{l}{(a) Random}
\end{tabular}
\end{minipage}
\hfill
\begin{minipage}{0.49\linewidth}
\begin{tabular}{lrrr}
\toprule
ID&\multicolumn{1}{c}{(mRP)}&\multicolumn{1}{c}{(pCP)}&\multicolumn{1}{c}{(Ours)}\\
\midrule
1 & 14.022 (7196) & \textbf{8.501} (\textbf{4273}) & 8.638 (4492) \\
2 & 17.569 (6615) & 9.544 (\textbf{3876}) & \textbf{7.817} (4089) \\
3 & 5.665 (6680) & \textbf{3.736} (\textbf{3917}) & 4.165 (4266) \\
4 & 4.611 (6758) & \textbf{3.082} (\textbf{4073}) & 3.814 (4287) \\
5 & 1.374 (6433) & \textbf{0.719} (3779) & 1.045 (\textbf{3733}) \\
6 & 4.492 (7108) & 2.741 (4264) & \textbf{2.702} (\textbf{4211}) \\
7 & 7.355 (6954) & \textbf{4.868} (4099) & 5.261 (\textbf{3827}) \\
8 & 5.936 (6840) & 4.294 (\textbf{4232}) & \textbf{3.616} (4275) \\
9 & 12.356 (7136) & \textbf{7.824} (\textbf{4241}) & 8.233 (4499) \\
10 & 4.968 (6591) & \textbf{3.265} (\textbf{3643}) & 4.009 (3977) \\
\bottomrule\\
\multicolumn{4}{l}{(b) Sparse}
\end{tabular}
\end{minipage}

\vspace{1em}

\begin{minipage}{0.49\linewidth}
\begin{tabular}{lrrr}
\toprule
ID&\multicolumn{1}{c}{(mRP)}&\multicolumn{1}{c}{(pCP)}&\multicolumn{1}{c}{(Ours)}\\
\midrule
1 & 1.397 (6999) & 0.493 (4101) & \textbf{0.281} (\textbf{1513}) \\
2 & 1.929 (7355) & 0.993 (4418) & \textbf{0.420} (\textbf{1661}) \\
3 & 0.497 (6871) & 0.262 (3976) & \textbf{0.193} (\textbf{1361}) \\
4 & 2.396 (6683) & 1.568 (4018) & \textbf{1.087} (\textbf{1213}) \\
5 & 3.408 (6638) & 0.968 (4011) & \textbf{0.535} (\textbf{1314}) \\
6 & 3.944 (7032) & \textbf{0.801} (4106) & 0.845 (\textbf{1480}) \\
7 & 0.949 (6557) & 0.541 (3814) & \textbf{0.300} (\textbf{1227}) \\
8 & 5.373 (7442) & 1.342 (4744) & \textbf{0.763} (\textbf{1686}) \\
9 & 2.818 (7156) & 1.065 (4239) & \textbf{0.628} (\textbf{1524}) \\
10 & 2.413 (7113) & 0.958 (4192) & \textbf{0.508} (\textbf{1511}) \\
\bottomrule\\
\multicolumn{4}{l}{(c) Structured}
\end{tabular}
\end{minipage}
\hfill
\begin{minipage}{0.48\linewidth}
\begin{tabular}{lrrr}
\toprule
ID&\multicolumn{1}{c}{(mRP)}&\multicolumn{1}{c}{(pCP)}&\multicolumn{1}{c}{(Ours)}\\
\midrule
1 & 0.129 (2897) & \textbf{0.064} (\textbf{1157}) & 0.112 (2844) \\
2 & 0.143 (2721) & \textbf{0.086} (\textbf{1104}) & 0.214 (2617) \\
3 & 0.087 (2792) & \textbf{0.046} (\textbf{1101}) & 0.078 (2693) \\
4 & 0.272 (2772) & \textbf{0.176} (\textbf{1027}) & 0.232 (2643) \\
5 & 0.296 (2824) & 0.274 (\textbf{1059}) & \textbf{0.265} (2750) \\
6 & 0.329 (2891) & \textbf{0.152} (\textbf{1196}) & 0.250 (2764) \\
7 & 0.276 (2840) & \textbf{0.164} (\textbf{1095}) & 0.288 (2767) \\
8 & 0.132 (2918) & \textbf{0.070} (\textbf{1156}) & 0.148 (2901) \\
9 & 0.189 (2917) & \textbf{0.072} (\textbf{1132}) & 0.134 (2889) \\
10 & 0.214 (2742) & \textbf{0.137} (\textbf{1092}) & 0.217 (2604) \\
\bottomrule\\
\multicolumn{4}{l}{(d) Modularity}
\end{tabular}
\end{minipage}

\caption{Runtime (seconds) with the number of constraints in parentheses. The fastest runtime and smallest constraint count are highlighted independently.}
\label{tab:runtime_constraints}
\end{table}

\subsection{Compared Formulations}

In this section, we report the computational performance of (pFRP).
As explained in Section~\ref{intro}, (pFRP) is obtained from (FRP) by perturbing the edge weights to reduce the number of transitivity constraints.
In what follows, this formulation is referred to as (Ours).

As a benchmark, we consider the two formulations explained in Section~\ref{intro}, namely those proposed in \cite{miyauchi2018exact} and \cite{koshimura2022concise}. 
The former is denoted by (mRP) while the latter is denoted by (CP), where C stands for \emph{concise}. 
In (CP), for each triple of distinct vertices $i,j,k \in V$, 
\begin{align*}
x_{ij} + x_{jk} &\le 1 + x_{ik} \quad \text{if } w_{jk} \ge 0,\\
x_{ij} + x_{ik} &\le 1 + x_{jk} \quad \text{if } w_{ik} \ge 0,\\
x_{ik} + x_{jk} &\le 1 + x_{ij} \quad \text{if } w_{ij} \ge 0
\end{align*}
are included as its transitivity constraints.
As in (pFRP), the edge weights can be perturbed so that $\ge$ can be replaced with $>$ in each condition, thereby reducing the number of transitivity constraints.
We denote the resulting formulation by (pCP) and include it in our experiments.

\subsection{Instances}

We consider four types of instances. 

\begin{itemize}
\item Random: Edge weights are independently drawn from $\{-1,1\}$ with equal probability. 
\item Sparse: Edge weights are independently drawn from $\{-1,0,1\}$ with equal probability.
\item Structured: The vertex set is partitioned into five clusters of equal size. 
Edge weights are independently drawn from $\{-1,1\}$ with probability $25\%$ and $75\%$ if endpoints belong to the same cluster, and $75\%$ and $25\%$ if endpoints belong to different clusters.
\item Modularity: Edge weights are defined as an instance of modularity maximization generated from a random graph following the Barabási--Albert model.
\end{itemize}
As discussed in Section~1, the settings of Random and Sparse is typical in correlation clustering and group technology, where edge weights zero are interpreted as missing values in practice. 
These two instances are purely random and rarely arise in practice.
In contrast, structured instances are designed to reflect practical situations in which a certain degree of clusterable structure exists;
that is, in an optimal partition, positive weights are concentrated within clusters, whereas negative weights are concentrated between clusters.

\subsection{Results}

All experiments were conducted on a 64-bit Windows 11 PC with an 11th Gen Intel(R) Core(TM) i5-1145G7 CPU at 2.60 GHz and 16 GB of RAM.
All the formulations were implemented in Python, and the resulting 0--1 integer linear programming problems were solved using Gurobi version 12.1.1. 
Although no time limit is imposed, we exclude the standard formulation (P) from our experiments since solving it requires prohibitively large computation time.

The results are summarized in Table~\ref{tab:runtime_constraints}. 
We set $n=30$ and generated 10 instances for each type. 
For each formulation, we report the computation time by Gurobi in seconds, together with the number of transitivity constraints in parentheses.
For each instance ID, the shortest computation time and the smallest number of transitivity constraints are shown in bold.
The main observations from these results are as follows.

\begin{itemize}
    \item
    As mentioned in Section~\ref{intro}, for Random instances, the number of transitivity constraints in (Ours) is expected to be approximately one third of that in (mRP), and this is indeed observed.
    However, the computation time is not reduced to one third; rather, it is reduced to roughly one half.
    On the other hand, the difference from (pCP) is limited.

    \item
    For Sparse instances, (Ours) still outperforms (mRP).
    However, in many cases, (pCP) outperforms (Ours) in terms of both computation time and the number of transitivity constraints.
    This is because, if two edges with weights 0 and 1 appear on the left-hand side of a transitivity constraint, then this constraint is always included in (Ours), whereas in (pCP), depending on the ordering of the indices, it may be excluded.
    Note that as can also be seen from the table, there is no inclusion relationship between the set of transitivity constraints of (Ours) and (pCP).

    \item
    For Structured instances, (Ours) achieves the best performance in most cases.
    Compared with (mRP), the computation time is sometimes reduced to about one seventh.
    Compared with (pCP), it is reduced to about one half on average.

    \item
For Modularity instances, (pCP) achieves the best performance in most cases.
(Ours) is outperformed by (mRP) for several instances.
Due to the inclusion relationship, (Ours) never has more transitivity constraints than (mRP),
although the difference is small, as explained in Section~\ref{intro}.
From the viewpoint of computation time, however, (Ours) can be worse than (mRP). 
This is possibly because unnecessarily eliminating constraints may degrade the solver's performance.
\end{itemize}

\section{Concluding Remarks}\label{conc}

In this paper, we have presented a reformulation of the standard formulation for the clique partitioning problem.
Among the transitivity constraints that appear in the formulation,
we have characterized those that are automatically satisfied when maximizing the objective function and are therefore redundant.
This characterization depends on the sign pattern of the objective coefficients; i.e., the edge weights.

Our result shows that it suffices to consider only those satisfying $w_{ij}+w_{jk}\ge 0$. 
It remains open whether a stronger condition exists.
A natural candidate would be $w_{ij}\ge 0 \text{ and } w_{jk}\ge 0$; however, we readily see that it admits counterexamples. 
Previous studies have focused solely on the signs of edge weights appearing on the left-hand side of the constraints.
It may therefore be worthwhile to incorporate the edge weights on the right-hand side as well.
For instance, our preliminary experiments suggest that it may be sufficient to consider only those transitivity constraints that satisfy $w_{ij}+w_{jk}-w_{ik}\ge 0$, that is, those for which the inner product between the normal vector of the inequality and the objective vector is nonnegative, although, of course, this property does not hold for general integer programs. 

Existing formulations tend to reduce the number of transitivity constraints when many edge weights are negative.
Therefore, it would be interesting to investigate whether there exists a formulation that instead yields fewer transitivity constraints when many edge weights are positive. 
In addition, it would be worthwhile to explore characterizations based on global structural properties,
rather than relying solely on local information such as the signs of edge weights appearing in individual transitivity constraints.

\bibliographystyle{elsarticle-harv} 
\bibliography{cas-refs}

\end{document}